\def\NZQ{\mathbb}               
\def\ZZ{{\NZQ Z}}
\def\RR{{\NZQ R}}
\def\frk{\mathfrak}               
\def\Phi{{\frk N}}
\def\eb{{\bold e}}
\def\opn#1#2{\def#1{\operatorname{#2}}} 
\opn\chara{char} 
\opn\length{\ell} 
\opn\pd{pd} 
\opn\rk{rk}
\opn\projdim{proj\,dim} 
\opn\injdim{inj\,dim} 
\opn\rank{rank}
\opn\depth{depth} 
\opn\grade{grade} 
\opn\height{height}
\opn\embdim{emb\,dim} 
\opn\codim{codim}
\opn\Tr{Tr} 
\opn\bigrank{big\,rank}
\opn\superheight{superheight}
\opn\lcm{lcm}
\opn\trdeg{tr\,deg}
\opn\reg{reg} 
\opn\lreg{lreg} 
\opn\ini{in} 
\opn\lpd{lpd}
\opn\size{size}
\opn\mult{mult}
\opn\dist{dist}
\opn\cone{cone}
\opn\lex{lex}
\opn\rev{rev}
\opn\div{div} \opn\Div{Div} \opn\cl{cl} \opn\Cl{Cl}
\opn\Spec{Spec} \opn\Supp{Supp} \opn\supp{supp} \opn\Sing{Sing}
\opn\Ass{Ass} \opn\Min{Min}
\opn\Ann{Ann} \opn\Rad{Rad} \opn\Soc{Soc}
\opn\Syz{Syz} \opn\Im{Im} \opn\Ker{Ker} \opn\Coker{Coker}
\opn\Am{Am} \opn\Hom{Hom} \opn\Tor{Tor} \opn\Ext{Ext}
\opn\End{End} \opn\Aut{Aut} \opn\id{id} \opn\ini{in}
\opn\nat{nat}
\opn\pff{pf}
\opn\Pf{Pf} \opn\GL{GL} \opn\SL{SL} \opn\mod{mod} \opn\ord{ord}
\opn\Gin{Gin}
\opn\Hilb{Hilb}\opn\adeg{adeg}\opn\std{std}\opn\ip{infpt}
\opn\Pol{Pol}
\opn\sat{sat}
\opn\Var{Var}
\opn\Gen{Gen}
\opn\aff{aff} \opn\con{conv} \opn\relint{relint} \opn\st{st}
\opn\lk{lk} \opn\cn{cn} \opn\core{core} \opn\vol{vol}
\opn\link{link} \opn\star{star}
\opn\gr{gr}
\def\Ac{{\mathcal A}}
\def\Jc{{\mathcal J}}
\def\Gc{{\mathcal G}}
\def\Oc{{\mathcal O}}
\def\Pc{{\mathcal P}}
\def\Cc{{\mathcal C}}
\def\pot#1#2{#1[\kern-0.28ex[#2]\kern-0.28ex]}
\opn\dirlim{\underrightarrow{\lim}}
\opn\inivlim{\underleftarrow{\lim}}
\let\to=\rightarrow
\def\Implies{\ifmmode\Longrightarrow \else
	\unskip${}\Longrightarrow{}$\ignorespaces\fi}
\def\implies{\ifmmode\Rightarrow \else
	\unskip${}\Rightarrow{}$\ignorespaces\fi}
\def\iff{\ifmmode\Longleftrightarrow \else
	\unskip${}\Longleftrightarrow{}$\ignorespaces\fi}
\newtheorem{Theorem}{Theorem}[section]
\newtheorem{Corollary}[Theorem]{Corollary}
\newtheorem{Remark}[Theorem]{Remark}
\newtheorem{Example}[Theorem]{Example}
\let\epsilon\varepsilon
\let\phi=\varphi
\let\kappa=\varkappa
\def\qed{\ifhmode\textqed\fi
	\ifmmode\ifinner\quad\qedsymbol\else\dispqed\fi\fi}
\def\textqed{\unskip\nobreak\penalty50
	\hskip2em\hbox{}\nobreak\hfil\qedsymbol
	\parfillskip=0pt \finalhyphendemerits=0}
\def\dispqed{\rlap{\qquad\qedsymbol}}
\opn\dis{dis}
\opn\height{height}
\opn\dist{dist}
\def\pnt{{\raise0.5mm\hbox{\large\bf.}}}
\opn\Lex{Lex}
\opn\conv{conv}
\begin{document}
\title{Quadratic Gr\"{o}bner bases arising from\\ partially ordered sets}
\author[T. Hibi]{Takayuki Hibi}
\address[Takayuki Hibi]{Department of Pure and Applied Mathematics,
	Graduate School of Information Science and Technology,
	Osaka University,
	Toyonaka, Osaka 560-0043, Japan}
\email{hibi@math.sci.osaka-u.ac.jp}
\author[K. Matsuda]{Kazunori Matsuda}
\address[Kazunori Matsuda]{Department of Pure and Applied Mathematics,
	Graduate School of Information Science and Technology,
	Osaka University,
	Toyonaka, Osaka 560-0043, Japan}
\email{kaz-matsuda@math.sci.osaka-u.ac.jp}
\author[A. Tsuchiya]{Akiyoshi Tsuchiya}
\address[Akiyoshi Tsuchiya]{Department of Pure and Applied Mathematics,
	Graduate School of Information Science and Technology,
	Osaka University,
	Toyonaka, Osaka 560-0043, Japan}
\email{a-tsuchiya@cr.math.sci.osaka-u.ac.jp}

\begin{abstract}
The order polytope $\mathcal{O}(P)$ and the chain polytope $\mathcal{C}(P)$ associated to 
a partially ordered set $P$ are studied. 
In this paper, we introduce the convex polytope $\Gamma(\mathcal{O}(P), -\mathcal{C}(Q))$ which is the convex hull of 
$\mathcal{O}(P) \cup (-\mathcal{C}(Q))$, where both $P$ and $Q$ are partially ordered sets
with $|P|=|Q|=d$. 
It will be shown that $\Gamma(\mathcal{O}(P), -\mathcal{C}(Q))$ is a normal and Gorenstein Fano polytope 
by using the theory of reverse lexicographic squarefree initial ideals of toric ideals. 

\end{abstract} 

\maketitle

\section*{introduction}

A convex polytope $\Pc \subset \RR^{d}$ is {\em integral} if all vertices belong to $\ZZ^{d}$. 
An integral convex polytope $\Pc \subset \RR^{d}$ is {\em normal} if, for each integer $N > 0$ 
and for each ${\bf a} \in N \Pc \cap \ZZ^{d}$, there exist ${\bf a}_1, \ldots, {\bf a}_N \in \Pc \cap \ZZ^{d}$ 
such that ${\bf a} = {\bf a}_1 + \cdots +{\bf a}_N$, where $N \Pc = \{ N \alpha \mid \alpha \in \Pc \}$. 
Furthermore, an integral convex polytope $\Pc \subset \RR^{d}$ is {\em Fano} if the origin of $\RR^{d}$ 
is a unique integer point belonging to the interior of $\Pc$. 
A Fano polytope $\Pc \subset \RR^{d}$ is {\em Gorenstein} 
if its dual polytope 
\[
\Pc^{\vee} := \{ {\bf x} \in \RR^{d} \mid \langle {\bf x}, {\bf y} \rangle \le 1 \  \mathrm{for\ all}  \ {\bf y} \in \Pc \}
\]
is integral as well. 
A Gorenstein Fano polytope is also said to be a reflexive polytope. 

In recent years, the study of Gorenstein Fano polytopes has been more vigorous. 
It is known that Gorenstein Fano polytopes correspond to Gorenstein Fano varieties, and 
they are related with mirror symmetry (see, e.g., \cite{mirror}, \cite{Cox}). 
On the other hand, to find new classes of Gorenstein Fano polytopes is one of the most important problem. 

As a way to construct normal Gorenstein Fano polytopes, taking the {\em centrally symmetric configuration} \cite{CSC} of an integer matrix is a powerful tool. 
In \cite{CSC}, it is shown that,  for any matrix $A$ with $\rank (A) = d$ such that all nonzero maximal minors 
of $A$ are $\pm 1$, the integral convex polytope arising from the centrally symmetric configuration of $A$ 
is normal Gorenstein Fano. 
Moreover, in \cite{harmony}, a way to construct non-symmetric normal Gorenstein Fano polytopes 
is introduced.  
In this paper, we treat the integral convex polytopes arising from combining the order polytopes and 
chain polytopes associated with two partially ordered sets. 

Let $P=\{p_1,\ldots,p_d\}$ and $Q=\{q_1,\ldots,q_d\}$ be finite partially ordered sets (posets, for short) 
with $|P|=|Q|=d$.
A subset $I$ of $P$ is called a {\em poset ideal} of $P$ if $p_{i} \in I$ and $p_{j} \in P$ together with $p_{j} \leq p_{i}$ guarantee $p_{j} \in I$.  
Note that the empty set $\emptyset$ as well as $P$ itself is a poset ideal of $P$. 
Let $\Jc(P)$ denote the set of poset ideals of $P$.
A subset $A$ of $Q$ is called an {\em antichain} of $Q$ if
$q_{i}$ and $q_{j}$ belonging to $A$ with $i \neq j$ are incomparable.  
In particular, the empty set $\emptyset$ and each 1-element subsets $\{q_j\}$ are antichains of $Q$.
Let $\Ac(Q)$ denote the set of antichains of $Q$.

Let $\eb_{1}, \ldots, \eb_{d}$ the canonical unit coordinate vectors of $\RR^{d}$.  
Then, for each subset $I \subset P$ and for each subset $J$ of $Q$, 
we define the $(0, 1)$-vectors $\rho(I) = \sum_{p_{i}\in I} \eb_{i}$ and $\rho(J) = \sum_{q_{j}\in J} \eb_{j}$, 
respectively. 
In particular $\rho(\emptyset)$ is the origin ${\bf 0}$ of $\RR^{d}$. 
Recall that the {\em order polytope} $\Oc(P)$ (\cite[Definition 1.1]{Stanley}) 
is the convex hull of $\{ \rho(I) \mid I \in \Jc(P) \}$ 
and the {\em chain polytope} $\Cc(Q)$ (\cite[Definition 2.1]{Stanley}) 
is the convex hull of $\{ \rho(A) \mid A \in \Jc(Q) \}$. 

Now, we define the convex polytope 
$\Gamma(\Oc(P), - \Cc(Q))$ as the convex hull of $\Oc(P) \cup -(\Cc(Q))$, 
where $- \Cc(Q) = \{ - \beta \mid \beta \in \Cc(Q) \}$.
This is a kind of $(-1, 0, 1)$-polytope, that is, 
each of its vertices belongs to $\{-1, 0, 1\}^{d}$. 
Note that $\dim \Gamma(\Oc(P), - \Cc(Q)) = d$.  
Moreover, since $\rho(P) = \eb_{1} + \cdots + \eb_{d} \in \Oc(P)$ and 
$\rho(\{q_j\})=\eb_{j} \in \Cc(Q)$ for $1 \leq j \leq d$, 
we have that the origin ${\bf 0}$ of $\RR^{d}$
belongs to the interior of $\Gamma(\Oc(P), - \Cc(Q))$.

$\Gamma(\Oc(P), - \Cc(Q))$ is an integral convex polytope arising from
combining the order polytope $\Oc(P)$ and the chain polytope $\Cc(Q)$ 
associated with two posets $P$ and $Q$.  
We consider the question when such polytope is normal Gorenstein Fano.  
This kind of question has been studied. 
It is known that $\Gamma(\Oc(P), - \Oc(P))$ is normal Gorenstein Fano for any poset $P$ \cite{HMOS} and 
$\Gamma(\Oc(P), - \Oc(Q))$ is normal Gorenstein Fano 
if $P$ and $Q$ possess a common linear extension \cite{twin}. 
Moreover, it is shown that $\Gamma(\Cc(P), - \Cc(Q))$ is normal Gorenstein Fano for any posets $P$ and $Q$ \cite{harmony}. 

In this paper, we prove that $\Gamma(\Oc(P), - \Cc(Q))$ is normal Gorenstein Fano 
for any posets $P$ and $Q$ 
by using the theory of reverse lexicographic squarefree initial ideals of toric ideals.
For fundamental materials on Gr\"{o}bner bases and toric ideals, see \cite{dojoEN}. 


 \section{Squarefree Quadratic Gr\"{o}bner basis}
Let, as before, $P = \{ p_{1}, \ldots, p_{d} \}$ and $Q = \{ q_{1}, \ldots, q_{d} \}$ 
be finite partially ordered sets with the same cardinarity. 
For a poset ideal $I \in \Jc(Q)$, we write $\max(I)$ for the set of maximal elements of $I$.
In particular, $\max(I)$ is an antichain of $Q$.  
Note that for each antichain $A$ of $Q$, there exists a poset ideal $I$ of $Q$ such that 
$A=\max(I)$. 

Let $K[{\bf t}, {\bf t}^{-1}, s] 
= K[t_{1}, \ldots, t_{d}, t_{1}^{-1}, \ldots, t_{d}^{-1}, s]$
denote the Laurent polynomial ring in $2d + 1$ variables over a field $K$. 
If $\alpha = (\alpha_{1}, \ldots, \alpha_{d}) \in \ZZ^{d}$, then
${\bf t}^{\alpha}s$ is the Laurent monomial
$t_{1}^{\alpha_{1}} \cdots t_{d}^{\alpha_{d}}s \in K[{\bf t}, {\bf t}^{-1}, s]$. 
In particular ${\bf t}^{\bf 0}s = s$.
We define the {\em toric ring} of $\Gamma(\Oc(P), - \Cc(Q))$ as the subring 
$K[\Gamma(\Oc(P), - \Cc(Q))]$ of $K[{\bf t}, {\bf t}^{-1}, s]$ which is generated
by those Laurent monomials ${\bf t}^{\alpha}s$ 
with $\alpha \in \Gamma(\Oc(P), - \Cc(Q)) \cap \ZZ^{d}$.
Let 
\[
K[{\bf x}, {\bf y}, z] = K[\{x_{I}\}_{\emptyset \neq I \in \Jc(P)} \cup 
\{y_{\max(J)}\}_{\emptyset \neq J \in \Jc(Q)} \cup \{ z \}]
\]
denote the polynomial ring over $K$
and define the surjective ring homomorphism 
$\pi : K[{\bf x}, {\bf y}, z] \to K[\Gamma(\Oc(P), - \Cc(Q))]$
by the following: 

\begin{itemize}
	\item
	$\pi(x_{I}) = {\bf t}^{\rho(I)}s$, 
	where $\emptyset \neq I \in \Jc(P)$;
	\item
	$\pi(y_{\max(J)}) = {\bf t}^{- \rho(\max(J))}s$,
	where $\emptyset \neq J \in \Jc(Q)$;
	\item
	$\pi(z) = s$.
\end{itemize}
Then the {\em toric ideal} $I_{\Gamma(\Oc(P), - \Cc(Q))}$ of $\Gamma(\Oc(P), - \Cc(Q))$ is 
the kernel of $\pi$. 

Let $<$ denote a reverse lexicographic order on $K[{\bf x}, {\bf y}, z]$
satisfying
\begin{itemize}
	\item
	$z <  y_{\max(J)} <x_{I}$;
	\item
	$x_{I'} < x_{I}$ if $I' \subset I$;
	\item
	$y_{\max(J')} < y_{\max(J)}$ if $J' \subset J$,
\end{itemize}
and $\Gc$ the set of the following binomials:
\begin{enumerate}
	\item[(i)]
	$x_{I}x_{I'} - x_{I\cup I'}x_{I \cap I'}$;
	\item[(ii)]
	$y_{\max(J)}y_{\max(J')} - y_{\max(J\cup J')}y_{\max(J * J')}$;
	\item[(iii)]
	$x_{I}y_{\max(J)} - x_{I \setminus \{p_{i}\}}y_{\max(J) \setminus \{q_{i}\}}$,
\end{enumerate}
where 
\begin{itemize}
	\item
	$x_{\emptyset} = y_{\emptyset} = z$;
	\item
	$I$ and $I'$ are poset ideals of $P$ which are incomparable in $\Jc(P)$;
	\item  
	$J$ and $J'$ are poset ideals of $Q$ which are incomparable in $\Jc(Q)$;
	\item
	$J*J'$ is the poset ideal of $Q$ generated by $\max(J \cap J')\cap (\max(J)\cup \max(J'))$;
	\item $p_{i}$ is a maximal element of $I$ and $q_{i}$ is a maximal element of $J$. 
\end{itemize}

\vspace{2mm}

First, we have 

	\begin{Theorem}
		\label{Boston}
		Work with the same situation as above.
		Then $\Gc$ is a Gr\"obner basis of
		$I_{\Gamma(\Oc(P), - \Cc(Q))}$ with respect to $<$.
	\end{Theorem}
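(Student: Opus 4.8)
The plan is to apply the standard criterion for Gr\"obner bases of toric ideals: since $I_{\Gamma(\Oc(P),-\Cc(Q))}=\ker\pi$ is generated by binomials, the set $\Gc$ (once we know $\Gc\subseteq I_{\Gamma(\Oc(P),-\Cc(Q))}$) is a Gr\"obner basis with respect to $<$ if and only if no two distinct monomials $u\neq v$ of $K[{\bf x},{\bf y},z]$ with $\pi(u)=\pi(v)$ are both \emph{standard}, i.e.\ both not divisible by any initial monomial $\ini_<(g)$, $g\in\Gc$. Thus I would proceed in three stages: check the membership $\Gc\subseteq I_{\Gamma(\Oc(P),-\Cc(Q))}$, compute the initial monomials, and prove that $\pi$ is injective on standard monomials.

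For membership, applying $\pi$ to a binomial of type (i) and using the identity $\rho(I)+\rho(I')=\rho(I\cup I')+\rho(I\cap I')$ of $(0,1)$-vectors shows the two terms have the same image; since $\Jc(P)$ is a distributive lattice, $I\cup I'$ and $I\cap I'$ are again poset ideals, so the binomial lies in $I_{\Gamma(\Oc(P),-\Cc(Q))}$. For type (iii) the claim is immediate, as deleting a maximal $p_i$ from $I$ lowers $\rho(I)$ by $\eb_i$ and deleting $q_i$ from $\max(J)$ lowers $\rho(\max(J))$ by $\eb_i$, and these two changes cancel in the exponent of ${\bf t}^{\rho(I)-\rho(\max(J))}$. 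The one point needing care is type (ii): one must verify that $J*J'$ is a well-defined poset ideal of $Q$ and that $\rho(\max(J))+\rho(\max(J'))=\rho(\max(J\cup J'))+\rho(\max(J*J'))$, an identity encoding the chain-polytope relations that is settled by a short analysis of which maximal elements of $J$ and $J'$ survive in $\max(J\cup J')$ and in $\max(J*J')$.

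Next, the initial monomials. In each binomial the smallest variable involved occurs only in the second term — $x_{I\cap I'}$ in (i), $y_{\max(J*J')}$ in (ii), and $y_{\max(J)\setminus\{q_i\}}$ (or $z$) in (iii) — so by the definition of the reverse lexicographic order $<$ the initial monomial is in each case the first term, namely $x_Ix_{I'}$, $y_{\max(J)}y_{\max(J')}$, and $x_Iy_{\max(J)}$ respectively. Consequently a monomial is standard exactly when its $x$-factors form a chain in $\Jc(P)$, its $y$-factors form a chain in $\Jc(Q)$, and no occurring pair $x_I$, $y_{\max(J)}$ shares a maximal index (there is no $i$ with $p_i\in\max(I)$ and $q_i\in\max(J)$).

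Finally, and this is where the substantive work lies, I would show that $\pi$ separates standard monomials. A standard monomial is described by a multichain $I_1\subsetneq\cdots\subsetneq I_r$ in $\Jc(P)$, a multichain $J_1\subsetneq\cdots\subsetneq J_s$ in $\Jc(Q)$, and a power of $z$; setting $\alpha^+=\sum\rho(I_k)$ and $\alpha^-=\sum\rho(\max(J_l))$, its image is ${\bf t}^{\alpha^+-\alpha^-}s^N$. Once $\alpha^+$ and $\alpha^-$ are known the two multichains are recovered uniquely — the $I_k$ as the level sets $\{p_i:\alpha^+_i\ge t\}$ (the distributive-lattice reconstruction underlying the Hibi ring), and the $J_l$ by the analogous reconstruction for the chain polytope — after which $N$ fixes the exponent of $z$. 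The difficulty, which I expect to be the main obstacle, is that $\alpha=\alpha^+-\alpha^-$ does \emph{not} by itself determine the splitting into $\alpha^+$ and $\alpha^-$: in a standard monomial the supports of $\alpha^+$ and $\alpha^-$ can genuinely overlap (already for a two-element chain), because condition (iii) cancels only an index that is maximal in both the $P$-ideal and the $Q$-ideal. The heart of the proof is therefore to show that standardness together with the prescribed total degree $N$ forces $\alpha^+$, $\alpha^-$ and the power of $z$ to be unique; I would do this by comparing two standard monomials of equal image and matching their chains from the top downward, exploiting that a maximal element of the largest $P$-ideal cannot be cancelled against the $Q$-side without violating standardness. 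Should this comparison prove unwieldy, the alternative is to verify Buchberger's criterion directly: the S-pairs internal to family (i) and to family (ii) are controlled by the known Gr\"obner bases of the order and chain polytopes, so only the mixed S-pairs involving a type (iii) relation demand genuinely new bookkeeping, and these are where I would concentrate the effort.
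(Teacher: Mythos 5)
Your proposal follows essentially the same route as the paper: it invokes the Ohsugi--Hibi criterion to reduce everything to showing that $\pi$ separates standard monomials, characterizes the standard monomials exactly as the paper does (two multichains plus the no-shared-maximal-index condition from type (iii)), and resolves the splitting ambiguity $\alpha=\alpha^+-\alpha^-$ by the very argument the paper uses --- a maximal element of the largest $P$-ideal missing from the other monomial forces, via the $t_i$-degree comparison, some $y_{\max(J_c)}$ with $q_{i^*}\in\max(J_c)$, contradicting standardness. The idea you single out as the heart of the matter is precisely the paper's contradiction, so the plan is correct and essentially identical to the published proof.
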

	\begin{proof}
		It is clear that $\Gc \subset I_{\Gamma(\Oc(P), - \Cc(Q))}$.
		For a binomial $f = u - v$, $u$ is called the {\em first}
		monomial of $f$ and $v$ is called the {\em second} monomial of $f$. 
		We note that the initial monomial of each of the binomials (i) -- (iii) 
		with respect to $<$ is its first monomial. 
		Let ${\rm in}_{<}(\Gc)$ denote the set of initial monomials of binomials 
		belonging to $\Gc$.  It follows from \cite[(0.1)]{OHrootsystem} that,
		in order to show that $\Gc$ is a Gr\"obner basis of
		$I_{\Gamma(\Oc(P), - \Cc(Q))}$ with respect to $<$, we must prove 
		the following assertion:
		($\clubsuit$) If $u$ and $v$ are monomials belonging to 
		$K[{\bf x}, {\bf y}, z]$ with $u \neq v$ such that 
		$u \not\in \langle {\rm in}_{<}(\Gc) \rangle$ 
		and $v \not\in \langle {\rm in}_{<}(\Gc) \rangle$,
		then $\pi(u) \neq \pi(v)$.
		
		Let $u, v \in K[{\bf x}, {\bf y}, z]$ be monomials
		with $u \neq v$.  Write
		\[
		u = z^{\alpha} x_{I_{1}}^{\xi_{1}} \cdots x_{I_{a}}^{\xi_{a}}
		y_{\max(J_{1})}^{\nu_{1}} \cdots y_{\max(J_{b})}^{\nu_{b}},
		\, \, \, \, \, \, \, \, \, \, 
		v = z^{\alpha'} x_{I'_{1}}^{\xi'_{1}} \cdots x_{I'_{a'}}^{\xi'_{a'}}
		y_{\max(J'_{1})}^{\nu'_{1}} \cdots y_{\max(J'_{b'})}^{\nu'_{b'}},
		\]
		where
		\begin{itemize}
			\item
			$\alpha \geq 0$, $\alpha' \geq 0$;
			\item
			$I_{1}, \ldots, I_{a}, I'_{1}, \ldots, I'_{a'} 
			\in \Jc(P) \setminus \{ \emptyset \}$;
			\item
			$J_{1}, \ldots, J_{b}, J'_{1}, \ldots, J'_{b'} 
			\in \Jc(Q) \setminus \{ \emptyset \}$;
			\item
			$\xi_{1}, \ldots, \xi_{a}, 
			\nu_{1}, \ldots, \nu_{b},
			\xi'_{1}, \ldots, \xi'_{a'}, 
			\nu'_{1}, \ldots, \nu'_{b'} > 0$,
		\end{itemize}
		and where $u$ and $v$ are relatively prime with
		$u \not\in \langle {\rm in}_{<}(\Gc) \rangle$ 
		and $v \not\in \langle {\rm in}_{<}(\Gc) \rangle$.
		Note that either $\alpha = 0$ or $\alpha' = 0$.
		Hence we may assume that $\alpha' = 0$.  Thus
		\[
		u = z^{\alpha} x_{I_{1}}^{\xi_{1}} \cdots x_{I_{a}}^{\xi_{a}}
		y_{\max(J_{1})}^{\nu_{1}} \cdots y_{\max(J_{b})}^{\nu_{b}},
		\, \, \, \, \, \, \, \, \, \, 
		v = x_{I'_{1}}^{\xi'_{1}} \cdots x_{I'_{a'}}^{\xi'_{a'}}
		y_{\max(J'_{1})}^{\nu'_{1}} \cdots y_{\max(J'_{b'})}^{\nu'_{b'}}.
		\]
		By using (i) and (ii), it follows that
		\begin{itemize}
			\item
			$I_{1} \subsetneq  I_{2} \subsetneq \cdots \subsetneq I_{a}$ and $J_{1} \subsetneq J_{2} \subsetneq \cdots \subsetneq J_{b}$;
			\item
			$I'_{1} \subsetneq I'_{2} \subsetneq \cdots \subsetneq I'_{a'}$ and	$J'_{1} \subsetneq J'_{2} \subsetneq \cdots \subsetneq J'_{b'}$.
		\end{itemize}
		Furthermore, by virtue of \cite{Hibi1987} and \cite{chain}, it suffices to discuss 
		$u$ and $v$ with $(a, a') \neq (0, 0)$ and $(b, b') \neq (0,0)$.
		
		Since $I_a \neq I'_{a'}$, we may assume that $I_a \setminus I'_{a'} \neq \emptyset$.
		Then there exists a maximal element $p_{i^*}$ of $I_a$ with $p_{i^*} \notin I'_{a'}$. 
		
		Now, suppose that $\pi(u)=\pi(v)$.
		Then we have 
		$$\sum\limits_{\stackrel{I \in \{I_1,\ldots,I_a\}}{p_i \in I}}\xi_I-\sum\limits_{\stackrel{J \in \{J_1,\ldots,J_b\}}{q_i \in \max(J)}}\nu_J
		=\sum\limits_{\stackrel{I' \in \{I'_1,\ldots,I'_{a'}\}}{p_i \in I'}}\xi'_{I'}-\sum\limits_{\stackrel{J' \in \{J'_1,\ldots,J'_{b'}\}}{q_i \in \max(J')}}\nu'_{J'}.$$
		for all $1 \leq i \leq d$ by comparing the degree of $t_i$. 
		Since $p_{i^*} \notin I'_{a'}$, one has
		 $$\sum\limits_{\stackrel{I \in \{I_1,\ldots,I_a\}}{p_{i^*} \in I}}\xi_I-\sum\limits_{\stackrel{J \in \{J_1,\ldots,J_b\}}{q_{i^*} \in \max(J)}}\nu_J
		 =-\sum\limits_{\stackrel{J' \in \{J'_1,\ldots,J'_{b'}\}}{q_{i^*} \in \max(J')}}\nu'_{J'}\leq 0.$$
		Moreover,  since $p_{i^*}$ is belonging to $I_a$, we also have 
		$$\sum\limits_{\stackrel{I \in \{I_1,\ldots,I_a\}}{p_{i^*} \in I}}\xi_I>0. $$
		Hence there exists an integer $c$ with $1\leq c \leq b$ such that $q_{i^*}$ is a maximal element of $J_c$.
		Therefore we have $x_{I_a}y_{\max(J_c)} \in \langle{\rm in}_{<}(\Gc) \rangle$, but this is a contradiction.
\end{proof}

This theorem guarantees that the toric ideal $I_{\Gamma(\Oc(P), - \Cc(Q))}$ 
possesses a squarefree initial ideal 
with respect to a reverse lexicographic order for which the variable corresponding to 
the column vector $[0, \ldots, 0, 1]^{t}$ is smallest. 
Therefore, we have the following corollary by using \cite[Lemma 1.1]{HMOS}. 

\begin{Corollary}
	\label{Berkeley}
	For any partially ordered sets $P$ and $Q$ with $|P|=|Q|=d$, $\Gamma(\Oc(P), - \Cc(Q))$ is a normal Gorenstein Fano polytope. 
\end{Corollary}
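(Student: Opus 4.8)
The plan is to obtain Corollary \ref{Berkeley} as an immediate consequence of Theorem \ref{Boston} together with \cite[Lemma 1.1]{HMOS}, so the entire task reduces to checking that the hypotheses of that lemma are satisfied in our situation.

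First I would record that the initial monomial of every binomial in $\Gc$ is squarefree. Indeed, the first (hence initial) monomials are $x_{I}x_{I'}$ in (i), $y_{\max(J)}y_{\max(J')}$ in (ii), and $x_{I}y_{\max(J)}$ in (iii); in (i) and (ii) the two factors are distinct because $I, I'$ (resp.\ $J, J'$) are incomparable in $\Jc(P)$ (resp.\ $\Jc(Q)$), while in (iii) one factor is an $x$-variable and the other a $y$-variable. Since Theorem \ref{Boston} guarantees that $\Gc$ is a Gr\"obner basis with respect to $<$, the initial ideal $\langle {\rm in}_{<}(\Gc)\rangle$ is generated precisely by these squarefree quadratic monomials, so $I_{\Gamma(\Oc(P), -\Cc(Q))}$ possesses a squarefree initial ideal.

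Next I would observe that $<$ is a reverse lexicographic order for which $z$ is the smallest variable (this is forced by the condition $z < y_{\max(J)} < x_{I}$), and that under $\pi$ the variable $z$ corresponds to $s = {\bf t}^{\bf 0}s$, i.e.\ to the column vector $[0, \ldots, 0, 1]^{t}$ coming from the origin ${\bf 0} \in \Gamma(\Oc(P), -\Cc(Q))$. Combined with the fact, already verified in the introduction, that $\Gamma(\Oc(P), -\Cc(Q))$ is a $d$-dimensional integral polytope whose unique interior lattice point is the origin, this is exactly the configuration to which \cite[Lemma 1.1]{HMOS} applies, and the lemma then yields at once that $\Gamma(\Oc(P), -\Cc(Q))$ is normal and Gorenstein Fano.

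Strictly speaking there is no genuinely hard step remaining: all of the substantive work lives in Theorem \ref{Boston}, and the corollary is a formal application of the cited lemma. If instead one wanted a self-contained argument, the real content would be twofold. The existence of a squarefree initial ideal forces the toric ring $K[\Gamma(\Oc(P), -\Cc(Q))]$ to be normal (by the standard fact on squarefree initial ideals, see \cite{dojoEN}), which is equivalent to normality of the polytope; and the reverse lexicographic order with $z$ smallest forces the $a$-invariant of the toric ring to equal $-1$, equivalently the dual polytope to be integral, which is the Gorenstein (reflexive) condition. I expect this second point, the reflexivity computation, to be the only delicate part, and it is exactly what is packaged into \cite[Lemma 1.1]{HMOS}.
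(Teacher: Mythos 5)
Your proposal is correct and follows essentially the same route as the paper: the corollary is deduced directly from Theorem \ref{Boston} by noting that the initial ideal is generated by squarefree (quadratic) monomials and that $<$ is a reverse lexicographic order in which $z$, the variable corresponding to the origin (i.e.\ to $[0,\ldots,0,1]^{t}$), is smallest, so that \cite[Lemma 1.1]{HMOS} applies. Your additional verification that the initial monomials are genuinely squarefree, and your sketch of what the cited lemma accomplishes, are consistent with (and slightly more detailed than) the paper's two-sentence derivation.
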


\section{Example and remark}

As the end of this paper, we give some examples. 
 
Let $\Pc \subset \RR^{d}$ be an integral convex polytope of dimension $d$. 
Given integers $t = 1, 2, \ldots, $ let $i(\Pc, t) := \#(t \Pc \cap \ZZ^{d})$. 
It is known that $i(\Pc, t)$ is a polynomial in $t$ of degree $d$. 
This polynomial is called the {\em Ehrhart polynomial} of $\Pc$. 
The generating function of $i(\Pc, t)$ satisfies
\[
1 + \sum_{t = 1}^{\infty} i(\Pc, t) \lambda^{t} = \frac{\delta_{\Pc}(\lambda)}{(1 - \lambda)^{d + 1}}
\]
where $\delta_{\Pc}(\lambda) = \sum_{i = 0}^{d} \delta_{i}\lambda^{i}$ is a polynomial in $t$ of degree $\le d$. 
Then the vector $(\delta_{0}, \ldots, \delta_{d})$ is called the {\em $\delta$-vector} of $\Pc$. 
It is known that a Fano polytope $\Pc \subset \RR^{d}$ is Gorenstein 
if and only if $\delta_{i} = \delta_{d - i}$ for all $i = 0, 1, \ldots, d$ (See \cite{HibiRedBook}). 

\begin{Example}
Let $P = \{p_1 < p_2 < \cdots < p_d\}$ and $Q = \{q_{i_1} < q_{i_2} < \cdots < q_{i_d}\}$ be chains $(1 \le i_1, \ldots, i_d \le d)$
and $(\delta_{0}, \ldots, \delta_{d})$ be the $\delta$-vector of $\Gamma(\Oc(P), - \Cc(Q))$. 
By \cite[Corollary 1.5]{HMT}, we have $i(\Gamma(\Oc(P), - \Cc(Q)), t) = i(\Gamma(\Cc(Q), - \Cc(P)), t)$, where
$\Gamma(\Cc(Q), - \Cc(P))$ is the convex hull of $\Cc(Q) \cup (- \Cc(P))$. 
Since $\Gamma(\Cc(Q), - \Cc(P))$ is the same as the convex hull of the 
centrally symmetric configuration of $d \times d$ identity matrix, 
hence $\delta_{\Pc}(\lambda) = (1 + \lambda)^d$.  
In particular, $\delta_{i} = \binom{d}{i}$ for all $i = 0, 1, \ldots, d$. 

On the other hand, the convex polytope $\Gamma(\Oc(P), - \Oc(Q))$ which is the convex hull of $\Oc(P) \cup (- \Oc(Q))$ 
is not Gorenstein Fano \cite[Lemma 1.1]{twin}. 
Indeed, the $\delta$-vector of $\Gamma(\Oc(P), - \Oc(Q))$ is $(1, d, 0, \ldots, 0)$. 
\end{Example}
 
\begin{Example}
Let $P = \{ p_1 < p_3, p_2 < p_4 \}$ be a partially ordered set 
and $Q = \{ q_1, q_2, q_3, q_4 \}$ be a partially ordered set such that 
the shape of its Hasse diagram is the same as that of $P$. 
If $Q = \{ q_1 < q_3, q_2 < q_4 \}$ then the $\delta$-vector of $\Gamma(\Oc(P), - \Cc(Q))$ is $(1, 12, 28, 12, 1)$. 
On the other hand, if $Q = \{ q_1 < q_2, q_3 < q_4 \}$ or $\{ q_1 < q_4, q_2 < q_3 \}$, then 
the $\delta$-vector of $\Gamma(\Oc(P), - \Cc(Q))$ is $(1, 12, 26, 12, 1)$. 
\end{Example}

\begin{Remark} 
We proved that the convex polytope $\Gamma(\Cc(P), - \Cc(Q))$ is normal Gorenstein Fano for all partially ordered sets $P, Q$ with $|P| = |Q|$
 $($\cite[Corollary 1.3]{HMT}$)$. 
Moreover, we proved that the Ehrhart polynomial of $\Gamma(\Oc(P), - \Cc(Q))$ is the same as that of $\Gamma(\Cc(P), - \Cc(Q))$ 
for all partially ordered sets $P, Q$ with $|P| = |Q|$. 
In addition, if $P$ and $Q$ possess a common linear extension,  these polytopes $\Gamma(\Oc(P), - \Oc(Q))$, $\Gamma(\Oc(P), - \Cc(Q))$ and 
$\Gamma(\Cc(P), - \Cc(Q))$ have the same Ehrhart polynomial $($\cite[Theorem 1.1]{HMT}$)$. 
\end{Remark}

One of the future problem is to determine the $\delta$-vectors of the above polytopes in terms of partially ordered sets $P$ and $Q$.

\end{document}